\documentclass[12pt]{amsart}

\title[Algebraicity and implicit definability]{Algebraicity and implicit definability\\ in set theory}

\author{Joel David Hamkins}
  \address{J. D. Hamkins, Mathematics, Philosophy, Computer Science, The Graduate Center of The City University of New York, 365 Fifth Avenue, New York, NY 10016
           \& Mathematics, The College of Staten Island of CUNY, Staten Island, NY 10314}
  \email{jhamkins@gc.cuny.edu, http://jdh.hamkins.org}

\author{Cole Leahy}
	\address{C. Leahy, Department of Linguistics \& Philosophy, 77 Massachusetts Avenue, Cambridge, MA 02139}
	\email{cleahy@mit.edu, http://cel.mit.edu}

\thanks{This project began when the first author responded to a question posed by the second author on the website MathOverflow \cite{MO71537Leahy:PointwiseAlgebraicModelsOfSetTheory}. The first author's research has been supported in part by
National Science Foundation program grant DMS-0800762, by Simons Foundation grant 209252, by PSC-CUNY grant 66563-00 44 and by grant 80209-06 20 from the CUNY Collaborative Incentive Award program. The authors thank Leo Harrington for an insightful conversation with the first author during math tea at the National University of Singapore in July of 2011. Commentary on this paper can be made at http://jdh.hamkins.org/algebraicity-and-implicit-definability.}

\usepackage{latexsym,amsfonts,amsmath,amssymb}
\usepackage[hidelinks]{hyperref}
%
%
%
%
\newtheorem{theorem}{Theorem}

\newtheorem{corollary}[theorem]{Corollary}

\newtheorem{observation}[theorem]{Observation}

\newcommand{\QED}{\end{proof}}

\def\proclaim[#1]{{\bf #1}}
\def\BF#1.{{\bf #1.}}

%
%

\newcommand{\Godel}{G\"odel}

\newcommand{\Levy}{L\'{e}vy}

%
%




%
%

%
%

\newcommand{\of}{\subseteq}

\newcommand{\set}[1]{\{\,{#1}\,\}}

\newcommand{\Coll}{\mathop{\rm Coll}}

\newcommand{\Th}{\mathop{\rm Th}}

\newcommand{\satisfies}{\models}


\newcommand{\gHOD}{\ensuremath{\mathord{{\rm g}\HOD}}} 

\newcommand{\Union}{\bigcup}

\newcommand{\intersect}{\cap}

\newcommand{\smalllt}{\mathrel{\mathchoice{\raise2pt\hbox{$\scriptstyle<$}}{\raise1pt\hbox{$\scriptstyle<$}}{\raise0pt\hbox{$\scriptscriptstyle<$}}{\scriptscriptstyle<}}}
\newcommand{\smallleq}{\mathrel{\mathchoice{\raise2pt\hbox{$\scriptstyle\leq$}}{\raise1pt\hbox{$\scriptstyle\leq$}}{\raise1pt\hbox{$\scriptscriptstyle\leq$}}{\scriptscriptstyle\leq}}}
\newcommand{\lt}{\smalllt}

\newcommand{\boolval}[1]{\mathopen{\lbrack\!\lbrack}\,#1\,\mathclose{\rbrack\!\rbrack}}
\def\[#1]{\boolval{#1}}
\newbox\gnBoxA
\newdimen\gnCornerHgt
\setbox\gnBoxA=\hbox{\tiny$\ulcorner$}
\global\gnCornerHgt=\ht\gnBoxA
\newdimen\gnArgHgt
\def\gcode #1{%
\setbox\gnBoxA=\hbox{$#1$}%
\gnArgHgt=\ht\gnBoxA%
\ifnum     \gnArgHgt<\gnCornerHgt \gnArgHgt=0pt%
\else \advance \gnArgHgt by -\gnCornerHgt%
\fi \raise\gnArgHgt\hbox{\tiny$\ulcorner$} \box\gnBoxA %
\raise\gnArgHgt\hbox{\tiny$\urcorner$}}
\newcommand{\UnderTilde}[1]{{\setbox1=\hbox{$#1$}\baselineskip=0pt\vtop{\hbox{$#1$}\hbox to\wd1{\hfil$\sim$\hfil}}}{}}
\newcommand{\Undertilde}[1]{{\setbox1=\hbox{$#1$}\baselineskip=0pt\vtop{\hbox{$#1$}\hbox to\wd1{\hfil$\scriptstyle\sim$\hfil}}}{}}
\newcommand{\undertilde}[1]{{\setbox1=\hbox{$#1$}\baselineskip=0pt\vtop{\hbox{$#1$}\hbox to\wd1{\hfil$\scriptscriptstyle\sim$\hfil}}}{}}
\newcommand{\UnderdTilde}[1]{{\setbox1=\hbox{$#1$}\baselineskip=0pt\vtop{\hbox{$#1$}\hbox to\wd1{\hfil$\approx$\hfil}}}{}}
\newcommand{\Underdtilde}[1]{{\setbox1=\hbox{$#1$}\baselineskip=0pt\vtop{\hbox{$#1$}\hbox to\wd1{\hfil\scriptsize$\approx$\hfil}}}{}}

\newcommand{\st}{\mid}
\renewcommand{\th}{{\hbox{\scriptsize th}}}

\renewcommand{\iff}{\mathrel{\leftrightarrow}}

\def\<#1>{\langle#1\rangle}

\newcommand{\Ord}{\mathop{{\rm Ord}}}

\newcommand{\ZFC}{{\rm ZFC}}
\newcommand{\ZF}{{\rm ZF}}


\newcommand{\KM}{{\rm KM}}

\newcommand{\GBC}{{\rm GBC}}

\newcommand{\CH}{{\rm CH}}

\newcommand{\GCH}{{\rm GCH}}

\newcommand{\HOD}{{\rm HOD}}
\newcommand{\HOA}{{\rm HOA}}

%
%

\newcommand{\cell}[1]{\boxit{\hbox to 17pt{\strut\hfil$#1$\hfil}}}
\newcommand{\head}[2]{\lower2pt\vbox{\hbox{\strut\footnotesize\it\hskip3pt#2}\boxit{\cell#1}}}
\newcommand{\boxit}[1]{\setbox4=\hbox{\kern2pt#1\kern2pt}\hbox{\vrule\vbox{\hrule\kern2pt\box4\kern2pt\hrule}\vrule}}
\newcommand{\Col}[3]{\hbox{\vbox{\baselineskip=0pt\parskip=0pt\cell#1\cell#2\cell#3}}}
\newcommand{\tapenames}{\raise 5pt\vbox to .7in{\hbox to .8in{\it\hfill input: \strut}\vfill\hbox to
.8in{\it\hfill scratch: \strut}\vfill\hbox to .8in{\it\hfill output: \strut}}}
\newcommand{\Head}[4]{\lower2pt\vbox{\hbox to25pt{\strut\footnotesize\it\hfill#4\hfill}\boxit{\Col#1#2#3}}}
\newcommand{\Dots}{\raise 5pt\vbox to .7in{\hbox{\ $\cdots$\strut}\vfill\hbox{\ $\cdots$\strut}\vfill\hbox{\
$\cdots$\strut}}}
%
%
%
%
\newcommand{\df}{\it} 
\hyphenation{su-per-com-pact-ness}\hyphenation{La-ver}

\newcommand{\OD}{{\rm OD}}
\newcommand{\OA}{{\rm OA}}
\newcommand{\Imp}{{\rm Imp}}
\newcommand{\Palg}{P_{\rm alg}}
\newcommand{\Pimp}{P_{\rm imp}}
\newcommand{\Pdef}{P_{\rm def}}
\newcommand{\gImp}{{\rm gImp}}
\renewcommand{\satisfies}{\vDash}

\newbox\gnBoxA
\newdimen\gnCornerHgt
\setbox\gnBoxA=\hbox{$\ulcorner$}
\global\gnCornerHgt=\ht\gnBoxA
\newdimen\gnArgHgt
\def\Godelnum #1{%
\setbox\gnBoxA=\hbox{$#1$}%
\gnArgHgt=\ht\gnBoxA%
\ifnum     \gnArgHgt<\gnCornerHgt \gnArgHgt=0pt%
\else \advance \gnArgHgt by -\gnCornerHgt%
\fi \raise\gnArgHgt\hbox{$\ulcorner$} \box\gnBoxA %
\raise\gnArgHgt\hbox{$\urcorner$}}

\begin{document}


\begin{abstract}
We analyze the effect of replacing several natural uses of definability in set theory by the weaker model-theoretic notion of algebraicity. We find, for example, that the class of hereditarily ordinal algebraic sets is the same as the class of hereditarily ordinal definable sets, that is, $\HOA = \HOD$. Moreover, we show that every (pointwise) algebraic model of $\ZF$ is actually pointwise definable. Finally, we consider the implicitly constructible universe $\Imp$---an algebraic analogue of the constructible universe---which is obtained by iteratively adding not only the sets that are definable over what has been built so far, but also those that are algebraic (or equivalently, implicitly definable) over the existing structure. While we know $\Imp$ can differ from $L$, the subtler properties of this new inner model are just now coming to light. Many questions remain open.
\end{abstract}

\maketitle

\thispagestyle{empty}

\noindent
We aim here to analyze the effect of replacing several natural uses of definability in set theory by the weaker model-theoretic notion of algebraicity and its companion concept of implicit definability. In place of the class $\HOD$ of hereditarily ordinal definable sets, for example, we consider the class $\HOA$ of hereditarily ordinal-algebraic sets. In place of the pointwise definable models of set theory, we examine its (pointwise) algebraic models. And in place of \Godel's constructible universe $L$, obtained by iterating the definable power set operation, we introduce the implicitly constructible universe \Imp, obtained by iterating the algebraic or implicitly definable power set operation. In each case we investigate how the change from definability to algebraicity affects the nature of the resulting concept. We are especially intrigued by \Imp, a new inner model of \ZF\ whose subtler properties are just now coming to light. Open questions about \Imp\ abound.

Before proceeding further, let us review the basic definability definitions. In the model theory of first-order logic, an element $a$ is {\df definable} (without parameters) in a structure $M$ if it is the unique object in $M$ satisfying some first-order property $\varphi$ there, that is, if $M\satisfies\varphi[b]$ just in case $b=a$. More generally, an element $a$ is {\df algebraic} in $M$ if it has a property $\varphi$ exhibited by only finitely many objects in $M$, so that $\set{b\in M \st M\satisfies\varphi[b]}$ is a finite set containing $a$.
For each class $P\of M$ we can similarly define what it means for an element to be $P$-definable or $P$-algebraic by allowing the formula $\varphi$ to have parameters from $P$. Since each element of a structure $M$ for a language with equality is trivially $M$-definable, the notion of a $P$-definable element is interesting only when the inclusion $P\of M$ is proper.

In the second-order context, a subset or class $A\of M^n$ is said to be {\df definable} in $M$ (again without parameters, unless otherwise specified), if $A=\set{\vec a\in M\st M\satisfies\varphi[\vec a]}$ for some first-order formula $\varphi$. In particular, $A$ is the unique class in $M^n$ with $\<M,A>\satisfies\forall \vec x\, [\varphi(\vec x)\iff A(\vec x)]$, in the language where we have added a predicate symbol for $A$. Generalizing this condition, we say that a class $A\of M^n$ is {\df implicitly definable} in $M$ if there is a first-order formula $\psi(A)$ in the expanded language, not necessarily of the form $\forall \vec x\, [\varphi(\vec x)\iff A(\vec x)]$, with the property that $A$ is the unique class for which $\<M,A>\satisfies\psi(A)$. While every (explicitly) definable class is thus also implicitly definable, we will see below that the converse can fail. Generalizing even more, we say that a class $A\of M^n$ is {\df algebraic} in $M$ if there is a first-order formula $\psi(A)$ in the expanded language such that $\<M,A>\satisfies\psi(A)$ and there are only finitely many $B\of M^n$ for which $\<M,B>\satisfies\psi(B)$. Allowing parameters from a fixed class $P\of M$ to appear in $\psi$ yields the notions of $P$-definability, implicit $P$-definability, and $P$-algebraicity in $M$. Simplifying the terminology, we say that $A$ is definable, implicitly definable, or algebraic {\df over} (rather than \emph{in}) $M$ if it is $M$-definable, implicitly $M$-definable, or $M$-algebraic in $M$, respectively. A natural generalization of these concepts arises by allowing second-order quantifiers to appear in $\psi$. Thus we may speak of a class $A$ as second-order definable, implicitly second-order definable, or second-order algebraic. Further generalizations are of course possible by allowing $\psi$ to use resources from other strong logics.

To begin our project, let us consider the class \HOA\ of hereditarily ordinal algebraic sets. In a strong second-order theory such as \KM, which proves the existence of satisfaction classes for first-order set-theoretic truth, we may stipulate explicitly that a set $a$ is {\df ordinal algebraic} if it is $\Ord$-algebraic in $V$, that is, if for some first-order formula $\varphi$ and ordinal parameters $\vec\alpha$ we have that $\varphi(a,\vec\alpha)$ holds and there are only finitely many $b$ for which $\varphi(b,\vec\alpha)$ holds. But because this definition invokes a truth predicate for first-order formulas, we cannot formalize it in the theory \GBC, which does not prove the existence of such a truth predicate, or in \ZFC, which by Tarski's non-definability theorem cannot have such a truth predicate. Of course, the same problem arises with the notion of ordinal definability, which one would like to characterize by saying that a set $a$ is ordinal definable if for some first-order $\varphi$ and ordinal parameters $\vec\alpha$ we have that $\varphi(b,\vec\alpha)$ holds just in case $b = a$. In that case, the familiar solution is to stipulate instead that a set $a$ is ordinal definable if it is $\theta$-definable in some $V_\theta$, that is, using ordinal parameters below $\theta$ and referring only to truth in $V_\theta$, a set. This condition can be formalized in \ZFC, and moreover (by the \Levy--Montague reflection theorem for first-order formulas) it is provably equivalent in \KM\ to the condition that $a$ is $\Ord$-definable in $V$.\footnote{See theorem 12.14 and equation 13.26 of \cite{Jech:SetTheory} on reflection and ordinal definability, respectively.} Similarly, in the present context we shall stipulate that a set $a$ is {\df ordinal algebraic} if it is $\theta$-algebraic in some $V_\theta$. Again, this can be formalized in \ZFC\ and is provably equivalent in \KM\ to the condition that $a$ is $\Ord$-algebraic in $V$. The point is that if $a$ is one of finitely many satisfying instances of $\varphi(\cdot, \vec\alpha)$ in $V$, then this fact will reflect to some $V_\theta$ with $\theta$ exceeding each of the parameters in $\vec\alpha$. Conversely, if $a$ is $\theta$-algebraic in $V_\theta$ via $\varphi(\cdot, \vec\alpha)$, then because $V_\theta$ is definable from parameter $\theta$, the object $a$ will be algebraic in $V$ via $\varphi(\cdot,\vec\alpha)^{V_\theta}$. Thus the collection \OA\ of ordinal algebraic sets is definable as a class in $V$. Likewise, then, for the collection \HOA\ of {\df hereditarily} ordinal algebraic sets, that is, ordinal algebraic sets whose transitive closures contain only ordinal algebraic sets.\footnote{
Nevertheless, there are some meta-mathematical subtleties to this approach. In the case of definability, suppose a model $M$ believes that $a$ is defined in $V_\theta^M$ by a formula $\varphi(\cdot,\vec\alpha)$. If $\varphi$ has standard length then, since $\theta\mapsto V_\theta^M$ is definable in $M$, we can see externally that $a$ is $\Ord^M$-definable in $M$ as the unique object satisfying the formula $\varphi(\cdot,\vec\alpha)^{V_\theta^M}$ with parameters $\vec\alpha$ and $\theta$. In fact, $a$ remains $\Ord^M$-definable in $M$ even when $\varphi$ is a nonstandard formula of $M$, for $a$ is the unique object thought by $M$ to satisfy, in $V_\theta^M$, the formula coded by the ordinal $\Godelnum{\varphi}$ with parameters coded by the ordinal $\Godelnum{\vec\alpha}$. Thus $\OD$, as defined in $M$ by our official definition, coincides with the class of objects in $M$ that really are $\Ord^M$-definable in $M$.

For algebraicity, however, there is a wrench in the works of the analogous absoluteness argument: although each object that is externally $\Ord^M$-algebraic in $M$ is also internally ordinal algebraic in $M$, it is conceivable that an $\omega$-nonstandard model $M$ may regard an object $a$ as ordinal algebraic even when it is not really $\Ord^M$-algebraic in $M$. This could happen if $n$ were a nonstandard integer and $M$ regarded $a$ as ordinal algebraic due to its membership in a set $\set{b \st V_\theta^M\satisfies\varphi(b,\vec\alpha)}$ of ``finite'' size $n$. In short, the more expansive concept of finiteness inside an $\omega$-nonstandard model $M$ may lead it to a more generous concept of algebraicity. Of course, since we do not yet know whether algebraicity differs from definability in any model of set theory, we cannot now confirm this possibility by presenting a particular $\omega$-nonstandard model $M$ for which $\OD^M$ contains sets that are not really $\Ord^M$-algebraic in $M$.}

\begin{theorem}\label{Theorem.HOA=HOD}
 The class of hereditarily ordinal algebraic sets is the same as the class of hereditarily ordinal definable sets: $$\HOA=\HOD.$$
\end{theorem}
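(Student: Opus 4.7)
The inclusion $\HOD \of \HOA$ is immediate, since a uniquely defined set is trivially one of (the one) finitely many witnesses to its defining formula. For the reverse inclusion $\HOA \of \HOD$, my plan is an induction on rank: I show that every $a \in \HOA$ lies in $\HOD$, assuming the same for every member of $\HOA$ of strictly smaller rank.

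So fix $a \in \HOA$ and assume the induction hypothesis. Because $\HOA$ is transitive, each element of $a$ lies in $\HOA$ at strictly smaller rank, and hence in $\HOD$ by hypothesis; in particular $a \of \HOD$. By algebraicity, fix a formula $\varphi$, an ordinal $\theta$, and ordinal parameters $\vec\alpha$ such that the set $F_0 := \set{b \in V_\theta : V_\theta \satisfies \varphi(b,\vec\alpha)}$ is finite and contains $a$. Since $\HOD$ is a definable class, the intersection $F := F_0 \intersect \set{b : b \of \HOD}$ is still ordinal definable, still finite, and still contains $a$.

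It remains to show that each element of $F$ is individually ordinal definable. For this, extend the definable $\HOD$-well-order $<_\HOD$ to a linear order $<^*$ on subsets of $\HOD$ by declaring $b <^* c$ iff the $<_\HOD$-least element of the symmetric difference $b \triangle c$ lies in $b$; this is well-defined because $b \triangle c \of \HOD$ is nonempty whenever $b \neq c$. Enumerate $F = \set{b_1,\ldots,b_n}$ with $b_1 <^* \cdots <^* b_n$. Then each $b_i$ is ordinal definable as the $<^*$-least element of $F$ strictly above $b_1,\ldots,b_{i-1}$. In particular $a = b_i$ for some $i$ is ordinal definable, and combined with $a \of \HOD$ this yields $a \in \HOD$, completing the induction.

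The sole subtlety lies in passing from an ordinal-definable finite set to its individual elements, which in general is not possible---if it were, then $\OA = \OD$ outright, with no induction required. The induction hypothesis is precisely what rescues us, by letting us arrange that the elements of $F$ are subsets of $\HOD$; the definable $\HOD$-well-order then sorts them canonically.
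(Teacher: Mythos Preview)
Your proof is correct and follows essentially the same route as the paper: an $\in$-induction (equivalently, your rank induction) to arrange $a\of\HOD$, restriction of the finite algebraic witness set to subsets of $\HOD$, and then use of the definable $\HOD$ well-order to induce a definable linear order on subsets of $\HOD$, picking out $a$ by its position. The only cosmetic differences are the direction of the induced order and your phrasing of ``$i$th element'' via iterated $<^*$-least; neither affects the argument.
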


\begin{proof}
Clearly $\HOD\of\HOA$. Conversely, we show by $\in$-induction that every element of $\HOA$ is in $\HOD$. Suppose that $a\in\HOA$ and assume inductively that every element of $a$ is in $\HOD$, so that $a\of\HOD$. Since $a$ is ordinal algebraic, there is an ordinal definable set $A = \set{a_0, \dots, a_n}$ containing it. We may assume that each $a_i $ is a subset of $\HOD$, by adding this condition to the definition of $A$. The definable well-ordering of $\HOD$ in $V$ gives rise to a definable bijection $h: \Ord \rightarrow \HOD$, where $h(\beta)$ is the $\beta^\th$ element of \HOD\ in that ordering.
Thus subsets of \HOD\ are definably identified via $h$ with sets of ordinals, and these in turn are definably linearly ordered, giving rise to a definable linear order $\prec$ on subsets of \HOD. Namely, $b \prec c$ if and only if the \HOD-least element of the symmetric difference of $b$ and $c$ is in $c$, that is, if $h(\min(h^{-1}b\mathrel{\triangle}h^{-1}c))\in c$, where $h^{-1}x=\set{\beta\st h(\beta)\in x}$. Thus $A$ is a finite ordinal definable set with a definable linear order $\prec$. So each $a_i$ is ordinal definable as the $k^\th$ element of $A$ with respect to $\prec$, for some $k$, using the same parameters as the definition of $A$ itself. In particular, $a$ is ordinal definable as desired.
\end{proof}

Since the foregoing proof uses the hereditary nature of \HOA, it does not seem to show $\OA=\OD$, and for now this question remains open. Perhaps it is consistent with \ZFC\ that some ordinal algebraic set is not ordinal definable. Or perhaps algebraicity simply coincides with definability in models of set theory. (Compare the previous footnote.) We are unsure.

A second application of definability in set theory concerns {\df pointwise definable} models, that is, structures in which each element is definable without parameters. For example, it is well known that the minimal transitive model of $\ZFC$ is pointwise definable, if it exists. Moreover, every countable model of $\ZFC$ has a pointwise definable extension by class forcing, and in fact every countable model of $\mathrm{GBC}$ has an extension by means of class forcing in which each set and each class is definable. Proofs of these facts and more appear in \cite{HamkinsLinetskyReitz:PointwiseDefinableModelsOfSetTheory}, which also gives references to earlier literature on the topic.

Here we define a structure to be pointwise algebraic, or simply {\df algebraic}, if each of its elements is algebraic in it. In some mathematical theories, this is not equivalent to pointwise definability. For instance, in the language of rings $\set{{+},{\cdot},0,1}$, every algebraic field extension of the rational field is algebraic in the sense just defined, since each of its elements is one of finitely many solutions to a particular polynomial equation over the integers, a property expressible in this language. But since such fields can have nontrivial automorphisms---a major focus of Galois theory---they can fail to be pointwise definable. Set theory is different:

\begin{theorem}\label{Theorem.AlgebraicIffPointwiseDefinable}
 Every algebraic model of \ZF\ is a pointwise definable model of $\ZFC+V=\HOD$.
\end{theorem}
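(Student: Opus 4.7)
The plan is to use Theorem~\ref{Theorem.HOA=HOD} by first showing that every $M$-ordinal is $\emptyset$-definable in $M$, and then leveraging an internal $V=\HOD$ to eliminate the ordinal parameters in an arbitrary ordinal-definition.

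First I would show that every ordinal of $M$ is $\emptyset$-definable in $M$. Given $\alpha\in\Ord^M$, pointwise algebraicity supplies a parameter-free formula $\varphi_0(x)$ with only finitely many solutions in $M$, one of which is $\alpha$. Setting $\varphi(x):=\varphi_0(x)\wedge\Ord(x)$, I may assume all solutions are $M$-ordinals, say $\alpha_0<\cdots<\alpha_m$ externally, with $\alpha=\alpha_i$. The external set $\{\alpha_0,\ldots,\alpha_m\}$ is bounded in $\Ord^M$ by $\alpha_m$, so separation in $M$ makes $S:=\{x:\varphi(x)\}$ a set in $M$. A pigeonhole observation now forces $M$ to regard the order-type of $S$ as the standard finite ordinal $m+1$: any larger $M$-ordinal would already contain externally infinitely many $M$-ordinals, contradicting the external size $m+1$ of $S$. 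Hence $M$ enumerates $S$ in increasing order as $s_0,\dots,s_m$, and $\alpha=s_i$ is $\emptyset$-definable in $M$ as the $i$-th element of $S$.

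Next I would argue that every element of $M$ is internally ordinal algebraic in $M$. Given $a\in M$ with external witnessing formula $\varphi$ and solutions $a_0,\dots,a_k\in M$, the \Levy--Montague reflection theorem applied inside $M$ supplies an ordinal $\theta\in\Ord^M$ large enough that all $a_i$ lie in $V_\theta^M$ and $V_\theta^M$ reflects $\varphi$ on all of $V_\theta^M$. Then $B=\{x\in V_\theta^M:V_\theta^M\satisfies\varphi(x)\}$ is an $M$-set of external size $k+1$, and the same pigeonhole argument makes its $M$-cardinality equal the standard natural $k+1$ as well. Thus $a$ is $\theta$-algebraic in $V_\theta^M$ and hence ordinal algebraic in $M$. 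Every element of $M$ therefore lies in $\HOA^M$, so by Theorem~\ref{Theorem.HOA=HOD} applied inside $M$, $\HOA^M=\HOD^M$, whence $M\satisfies V=\HOD$, $M\satisfies\ZFC$, and each $a\in M$ is $\Ord$-definable in $M$ via some $\chi(x,\vec\alpha)$. Substituting the parameter-free definitions of the $\alpha_i$ supplied by the first step yields a parameter-free definition of $a$, completing pointwise definability.

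The principal obstacle is controlling the behavior of finiteness when $M$ is $\omega$-nonstandard: as the footnote preceding Theorem~\ref{Theorem.HOA=HOD} emphasizes, external finiteness of a definable class need not coincide with internal finiteness. The pigeonhole observation used in both steps addresses this directly: an $M$-set of some standard external size $n$ cannot have $M$-cardinality $\kappa$ that is either infinite or nonstandardly finite in $\omega^M$, since in either case $\kappa$ would externally contain all the standard naturals of $\omega^M$ and so be externally infinite. Careful choice in the reflection step, insisting that $V_\theta^M$ reflect $\varphi$ on all of $V_\theta^M$ rather than only on the explicit witness list, ensures that the internally defined solution set matches the external one.
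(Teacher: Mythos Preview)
Your argument is correct and follows essentially the same route as the paper's proof: establish that each element is internally ordinal algebraic, invoke Theorem~\ref{Theorem.HOA=HOD} inside $M$ to obtain $V=\HOD$, show that the ordinals are parameter-free definable because any finite definable set of ordinals is linearly ordered by $\in$, and combine. The main difference is expository: you reorder the steps and supply the pigeonhole argument handling the $\omega$-nonstandard case explicitly, whereas the paper dispatches both the passage from external algebraicity to internal ordinal algebraicity and the finiteness of the ordinal solution set with a word (``of course'', ``finite definable set'').
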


\begin{proof}
If $M \vDash \ZF$ is algebraic, then of course each of its elements is ordinal algebraic in the sense of $M$. By theorem \ref{Theorem.HOA=HOD}, it follows that each element of $M$ is ordinal definable in the sense of $M$, and so $M\satisfies\ZFC+V=\HOD$. But each ordinal of $M$, being algebraic, belongs to a finite definable set of ordinals of $M$, a set that is definably linearly ordered by the membership relation of $M$. So each ordinal of $M$ is in fact definable in $M$. Since $M$ satisfies $V=\HOD$, this implies that every object in $M$ is definable, and so $M$ is pointwise definable. \end{proof}

\begin{corollary}\label{Corollary.WhenHasPointwiseAlgebraic}
An extension of $\ZF$ has an algebraic model if and only if it is consistent with $V = \HOD$.
\end{corollary}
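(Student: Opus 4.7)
The plan is to prove the two directions separately: the forward direction will follow immediately from Theorem \ref{Theorem.AlgebraicIffPointwiseDefinable}, while the reverse direction will be obtained by taking a definable Skolem hull inside a model of $V=\HOD$.

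For the forward direction, I would suppose $T$ extends $\ZF$ and has an algebraic model $M$. By Theorem \ref{Theorem.AlgebraicIffPointwiseDefinable}, $M$ is a pointwise definable model of $\ZFC + V = \HOD$, so in particular $M \satisfies T + V = \HOD$, and hence $T$ is consistent with $V = \HOD$.

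For the reverse direction, I would assume that $T \supseteq \ZF$ is consistent with $V = \HOD$ and fix any model $N \satisfies T + V = \HOD$. Because $V = \HOD$ holds in $N$, there is a definable well-ordering of the universe of $N$, which yields a canonical definable Skolem function for every first-order formula---namely, the one sending parameters $\vec b$ to the $\HOD$-least witness of $\exists x\,\varphi(x,\vec b)$ when such a witness exists. Let $N_0$ consist of the elements of $N$ that are definable in $N$ without parameters. Then $N_0$ is closed under these Skolem functions, so by the Tarski--Vaught test $N_0 \elesub N$; in particular $N_0 \satisfies T$. Moreover, if $a \in N_0$ is defined in $N$ by some formula $\varphi(x)$, then by elementarity $a$ is defined in $N_0$ by the same $\varphi$, so $N_0$ is pointwise definable, hence algebraic. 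This is the desired algebraic model of $T$.

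The only delicate step is the observation that pointwise definability transfers from $N$ down to $N_0$, but this is immediate from elementarity, since the property "$x$ is the unique witness of $\varphi$" is first-order and passes between $N$ and its elementary submodel. Thus no serious obstacle arises.
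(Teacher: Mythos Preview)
Your proof is correct and follows essentially the same approach as the paper: the forward direction is an immediate consequence of Theorem~\ref{Theorem.AlgebraicIffPointwiseDefinable}, and for the reverse direction the paper likewise takes a model of $T+V=\HOD$, uses the definable well-ordering to obtain definable Skolem functions, and passes to the substructure of parameter-free definable elements, which is then an elementary substructure and hence a pointwise definable (so algebraic) model of $T$. Your write-up simply spells out a few of the routine verifications in slightly more detail.
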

\begin{proof}
In light of theorem \ref{Theorem.AlgebraicIffPointwiseDefinable}, we need only prove the right-to-left direction. So suppose $T$ extends $\ZF$ and is consistent with $V = \HOD$. Then some $M \vDash T$ satisfies $V = \HOD$. Since $M$ thinks the universe is definably well ordered, it has definable Skolem functions. Hence the $N\of M$ consisting of precisely the definable elements of $M$ is closed under these Skolem functions and is therefore an elementary substructure of $M$. So every element of $N$ is definable in $N$ by the same formula that defines it in $M$. Therefore $N \vDash T$ is pointwise definable and hence algebraic.
\end{proof}

\begin{corollary}
A complete extension of $\ZF$ has an algebraic model if and only if it has, up to isomorphism, a unique model in which each ordinal is definable.
\end{corollary}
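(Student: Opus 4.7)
The plan is to combine Theorem~\ref{Theorem.AlgebraicIffPointwiseDefinable} and Corollary~\ref{Corollary.WhenHasPointwiseAlgebraic} with a Henkin-style construction, with the forward direction essentially immediate and the backward one reducing to showing that the uniqueness hypothesis forces $T\vdash V=\HOD$.

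\emph{Forward direction.} Suppose $T$ has an algebraic model $M_0$. By Theorem~\ref{Theorem.AlgebraicIffPointwiseDefinable}, $M_0$ is pointwise definable and satisfies $V=\HOD$, so by completeness $T\vdash V=\HOD$, and $M_0$ witnesses that $T$ has a model with each ordinal definable. For uniqueness, any $M\satisfies T$ with each ordinal definable satisfies $V=\HOD$, so every element is ordinal definable; since each ordinal is definable without parameters, every element is thus definable without parameters, making $M$ pointwise definable. Two pointwise definable models of a complete theory are isomorphic via the natural map sending each element to the unique element of the other model satisfying the same defining formula.

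\emph{Backward direction.} Suppose $T$ has, up to isomorphism, a unique model $M$ with each ordinal definable. By Corollary~\ref{Corollary.WhenHasPointwiseAlgebraic}, it suffices to show that $T$ is consistent with $V=\HOD$. Suppose not: then $T\vdash V\ne\HOD$, so $M\satisfies V\ne\HOD$ and some $a\in M$ is not ordinal definable. Since every ordinal of $M$ is definable without parameters, ordinal definability and definability coincide in $M$, so $a$ is not definable; in particular $a$ is not an ordinal, and its complete type over $\emptyset$ is non-principal. I will construct a countable $M'\satisfies T$ with each ordinal definable that omits the type of $a$, yielding $M'\not\cong M$ and contradicting uniqueness.

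Build $M'$ by a standard Henkin construction using constants $(c_i)_{i<\omega}$, arranging that (i) whenever $c_i$ is forced to be an ordinal, it is committed to a principal type, and (ii) the complete type of $c_i$ differs from that of $a$. For (i), any consistent formula $\psi(x)$ entailing $x\in\Ord$ is satisfied in $M$ by a definable ordinal---since $T$ is complete so $M\satisfies \exists x\,\psi(x)$, and every ordinal of $M$ is definable---so $\psi$ admits a principal completion which can be used to commit $c_i$. For (ii), non-principality of the type of $a$ guarantees at each stage some formula in that type not yet forced by the current commitment of $c_i$, whose negation can be consistently added. The resulting term model $M'$ then satisfies $T$, has each ordinal definable, and omits the type of $a$.

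The main obstacle is clause (i): a direct appeal to the omitting types theorem cannot eliminate the non-principal ordinal types of $T$, which may be uncountably many. What lets us sidestep this is precisely the hypothesis that $M$ has every ordinal definable, which furnishes, for every consistent ordinal formula, a principal completion---exactly the resource needed to commit each ordinal-constant in the Henkin construction.
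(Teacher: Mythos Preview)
The paper's proof is a two-line appeal to a theorem of Paris (as presented by Enayat): a complete extension $T$ of $\ZF$ proves $V=\HOD$ if and only if it has, up to isomorphism, a unique model in which every ordinal is definable. Combined with Corollary~\ref{Corollary.WhenHasPointwiseAlgebraic} and the fact that for complete $T$ consistency with $V=\HOD$ coincides with provability of $V=\HOD$, the corollary follows at once. Your forward direction is correct and in effect reproves the easy half of Paris's theorem directly, which is fine and more self-contained than the paper's citation.

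The backward direction, however, has a genuine gap: the assertion that the undefinable $a$ has non-principal type is unjustified and can fail. An element is definable precisely when its type is isolated \emph{and} uniquely realized; an undefinable element may perfectly well realize an isolated type with several realizations. For a concrete obstruction, let $T$ be the complete theory of a symmetric-extension model of $\ZF$ containing a $\emptyset$-definable two-element set $A=\{a_0,a_1\}$ whose members are indiscernible. Neither $a_i$ lies in $\OD$ in that model (else the $\OD$-least element of $A$ would be $\emptyset$-definable, contradicting indiscernibility), so $T\vdash V\neq\HOD$; yet by completeness, for every formula $\psi(x)$ either $T\vdash\forall x\in A\,\psi(x)$ or $T\vdash\forall x\in A\,\neg\psi(x)$, so in every model of $T$ the two members of $A$ share a single type isolated by $x\in A$. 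Since every completion of $\ZF$ has a Paris model (the type of an undefinable ordinal is never isolated, as one may pass to the least witness of any consistent ordinal formula---this is exactly your clause~(i) argument), $T$ has a Paris model $M$ in which the elements of $A$ are undefinable with principal type, and your omitting-types step for clause~(ii) cannot proceed for such an $a$.

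Your clause~(i) mechanism for keeping ordinals definable is correct, but manufacturing a second non-isomorphic Paris model when, say, the given $M$ is atomic is precisely the hard half of Paris's theorem, and requires an idea beyond what your sketch provides.
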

\begin{proof}
By a result of Paris, a complete extension $T$ of $\ZF$ proves $V = \HOD$ if and only if it has, up to isomorphism, a unique model in which each ordinal is definable. (See theorem 3.6 of \cite{Enayat:ModelsOfSetTheoryWithDefinableOrdinals} for a proof.) In light of this, corollary \ref{Corollary.WhenHasPointwiseAlgebraic} yields the desired conclusion.
\end{proof}

Although theorem \ref{Theorem.AlgebraicIffPointwiseDefinable} shows that algebraicity and definability coincide in algebraic models of set theory, we do not know whether there can be a model of \ZF\ with an algebraic non-definable element.

Finally, we consider the algebraic analogue of \Godel's constructible universe. \Godel\ builds his universe $L$ by iterating the definable power set operation $\Pdef$, where for any structure $M$ the definable power set $\Pdef(M)$ consist of all classes $A\of M$ that are definable over $M$. Similarly, we define the {\df implicitly definable} power set of $M$ to be the collection $\Pimp(M)$ of all classes $A\of M$ that are implicitly definable over $M$, and we define the {\df algebraic} power set of $M$ to be the collection $\Palg(M)$ of all $A\of M$ that are algebraic over $M$.

\begin{observation}\label{Observation.AlgIffImplicit}
 The algebraic power set of a structure $M$ is identical to its implicitly definable power set: $$\Palg(M) = \Pimp(M).$$
\end{observation}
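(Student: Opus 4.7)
The plan is to prove the two inclusions separately. The containment $\Pimp(M) \of \Palg(M)$ is immediate from the definitions, since a class that is the \emph{unique} witness of some formula $\psi$ is trivially one of only \emph{finitely} many such witnesses (namely one). The substantive direction is $\Palg(M) \of \Pimp(M)$, and for this I would use a witnessing argument that uses extra parameters to pin down the intended class among its finitely many algebraic companions.

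So suppose $A \of M^n$ is algebraic over $M$, as witnessed by a formula $\psi(A)$ (possibly containing $M$-parameters) for which the collection $\set{B \of M^n \st \<M,B>\satisfies\psi(B)}$ is a finite set, say $\set{A, A_1, \dots, A_k}$. For each $i \leq k$, because $A_i \neq A$ I can choose a tuple $\vec x_i \in A \mathrel{\triangle} A_i$, that is, a tuple in $M^n$ on which $A$ and $A_i$ disagree. I would then form the new formula
\[
 \psi'(X) \;\equiv\; \psi(X) \wedge \bigwedge_{i \leq k} \bigl(\vec x_i \in X \iff \vec x_i \in A\bigr),
\]
where the right-hand side of each biconditional is replaced by the appropriate truth value depending on whether $\vec x_i \in A$ or not, so that $\psi'$ is a legitimate first-order formula in the expanded language with the additional $M$-parameters $\vec x_1, \dots, \vec x_k$.

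By construction, $\<M,A>\satisfies\psi'(A)$. Conversely, any $A_i \neq A$ disagrees with $A$ on $\vec x_i$ and so fails the $i^\th$ conjunct, while any class $B$ satisfying $\psi'$ must already satisfy $\psi$ and hence must be one of the $A_i$ or $A$ itself. Therefore $A$ is the \emph{unique} class $X \of M^n$ for which $\<M,X>\satisfies\psi'(X)$, witnessing that $A \in \Pimp(M)$.

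There is essentially no serious obstacle here beyond the witnessing trick; the argument is just the observation that a finite set of alternatives can be ruled out one at a time by naming elements on which they differ from the target. The only mild subtlety is that implicit definability \emph{over} $M$ allows $M$-parameters in $\psi$, which is exactly what makes it legitimate to adjoin the witness tuples $\vec x_1, \dots, \vec x_k$ as parameters in $\psi'$.
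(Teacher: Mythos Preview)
Your proof is correct and follows essentially the same approach as the paper: both arguments distinguish $A$ from each of its finitely many algebraic companions by choosing a witness in the symmetric difference, then conjoin the corresponding membership assertions (with the witnesses as new $M$-parameters) to the original formula to obtain an implicit definition. Your write-up is in fact slightly more careful than the paper's in tracking that the witnesses are tuples in $M^n$ rather than single elements of $M$.
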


\begin{proof}
Obviously a class $A\of M$ is algebraic over $M$ if it is implicitly definable over $M$. For the converse, suppose $A\of M$ is algebraic over $M$ via $\varphi$. Note that each of the finitely many $B \neq A$ satisfying $\varphi$ in $M$ is distinguished from $A$ by some parameter $a \in M$ that is in $A$ but not $B$ or vice versa. Conjoining to $\varphi$ assertions about these parameters (either that $a \in A$ or $a \notin A$, as appropriate) produces a formula $\psi$ witnessing that $A$ is implicitly definable over $M$.
\end{proof}

Though algebraic classes are thus always implicitly definable, implicitly definable classes need not be (explicitly) definable. For example, if $M$ is an $\omega$-standard model of set theory, then its full satisfaction class $\text{Sat}^M=\set{\<\Godelnum{\varphi},\vec a>\st M\satisfies\varphi[\vec a]}$ is implicitly definable in $M$ as the unique class satisfying the familiar Tarskian recursive truth conditions. But Tarski's theorem on the non-definability of truth shows precisely that $\text{Sat}^M$ cannot be defined in $M$, even with parameters. It is interesting to note that this argument can fail when $M$ is $\omega$-nonstandard, for satisfaction classes need not be unique in such models, as shown in \cite{Krajewski1974:MutuallyInconsistentSatisfactionClasses} (see also further results in \cite{HamkinsYang:SatisfactionIsNotAbsolute}). 

Let us make a somewhat more attractive example, where the structure $M$ can be $\omega$-nonstandard and the relevant implicitly definable class $A$ must be {\df amenable} to $M$, meaning that $A\intersect a\in M$ for each $a\in M$. This is an improvement over the above example, for $\text{Sat}^M$ can fail to be amenable to $M$. (When $\text{Sat}^M$ is amenable to $M$, it follows that $\Th(M)$ is in $M$. But this fails, for example, in pointwise definable models of set theory, as mentioned in \cite{HamkinsLinetskyReitz:PointwiseDefinableModelsOfSetTheory}.) For the modified argument, let $N$ be any model of \ZFC, and let $\alpha_n$ be the least $\Sigma_n$-reflecting ordinal in $N$, that is, the least ordinal such that $(V_{\alpha_n})^N \prec_n N$. Such an $\alpha_n$ exists by the reflection theorem and is definable in $N$ using the definability of $\Sigma_n$ satisfaction. Let $M_n=(V_{\alpha_n})^N$ and let $M=\Union_n M_n$ be the union of the progressively elementary chain $M_0 \prec_0 M_1 \prec_1 \cdots$ of models. Since the union of a $\Sigma_n$-elementary chain is a $\Sigma_n$-elementary extension of each component of the chain, we have $M_n \prec_n M$ and so $M\prec N$. Note that $A=\set{\<\alpha_n,\Godelnum{\varphi},\vec a> \st \vec a\in M_n, \varphi\in\Sigma_n,n\in\omega, M_n\satisfies\varphi[\vec a]}$ is amenable to $M$. For if $a \in M$ then $a \in M_n$ for some $n$, whence $A\intersect a$ does not contain any triples $\<\alpha_m,\Godelnum{\varphi},\vec a>$ for $m\geq n$, and consequently $A\intersect a$ is constructible from $a$ and the $\Sigma_n$ satisfaction class of $M_n$ together with the finitely many parameters $\alpha_k$ for $k < n$, all of which are in $M$. Furthermore, $A$ is implicitly definable in $M$ using a version of the Tarskian satisfaction conditions. For any two truth predicates must agree that each $\alpha_n$ is least such that $(V_{\alpha_n})^M \prec_n M$, and the nonstandard formulas never get a chance to appear in $A$ as we have defined $M_n$ only for standard $n$ even when $N$ (and hence $M$ itself) is $\omega$-nonstandard. But $A$ cannot be definable in $M$, even with parameters, since from $A$ we can define a truth predicate for $M$.

Having appreciated these facts, we introduce the algebraic analogue of $L$, the {\df implicitly constructible universe}, hereby dubbed \Imp\ and built as follows:
\begin{align*}
 \Imp_0 &= \emptyset \quad &\Imp_{\alpha + 1} &= \Pimp(\Imp_\alpha)\\
 \Imp_\lambda &= \bigcup_{\alpha < \lambda} \Imp_\alpha, \mbox{ for limit $\lambda$} \quad &\Imp &= \bigcup_\alpha \Imp_\alpha.
\end{align*}

\begin{theorem}\label{Theorem.ImpSatisfiesZF}
$\Imp$ is an inner model of $\ZF$ with $L\of\Imp\of\HOD$.
\end{theorem}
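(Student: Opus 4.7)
The plan is to mirror \Godel's proof that $L$ is an inner model of $\ZF$, exploiting throughout the inclusion $\Pdef(M) \subseteq \Pimp(M)$ observed just after Observation~\ref{Observation.AlgIffImplicit}: every explicit definition $A = \{x : \varphi(x)\}$ is implicit via the biconditional $\forall x\,[A(x) \iff \varphi(x)]$.

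I would first establish by simultaneous induction on $\alpha$ that each $\Imp_\alpha$ is transitive and that $\Imp_\alpha \subseteq \Imp_{\alpha+1}$. For the successor step, any $a \in \Imp_\alpha$ is implicitly definable as a subset of $\Imp_\alpha$ using $a$ itself as a parameter and the formula $X = a$, so $a \in \Pimp(\Imp_\alpha) = \Imp_{\alpha+1}$; transitivity of $\Imp_{\alpha+1}$ then follows, since its members are subsets of $\Imp_\alpha \subseteq \Imp_{\alpha+1}$. A routine induction then shows $\alpha \in \Imp_{\alpha+1}$, so $\Ord \subseteq \Imp$, and the same inclusion $\Pdef \subseteq \Pimp$ yields $L_\alpha \subseteq \Imp_\alpha$ inductively, hence $L \subseteq \Imp$.

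With transitivity and the ordinals in hand, I would verify the $\ZF$ axioms in $\Imp$ by transporting the standard arguments for $L$. Extensionality, foundation, pairing, union, and infinity are immediate, using that each $\Imp_\alpha$ belongs to $\Imp_{\alpha+1}$ via the formula $X = X$. For separation---and analogously for replacement and power set---the key move is \Levy--Montague reflection in the definable hierarchy $\langle \Imp_\alpha : \alpha \in \Ord \rangle$: given a formula $\varphi$ and parameters $\vec p, z \in \Imp$, the set $\{x \in z : \Imp \satisfies \varphi(x,\vec p)\}$ equals $\{x \in z : \Imp_\alpha \satisfies \varphi(x,\vec p)\}$ for all sufficiently large $\alpha$, and this latter set is explicitly definable over $\Imp_\alpha$ using $z$ and $\vec p$ as parameters, hence in $\Pdef(\Imp_\alpha) \subseteq \Pimp(\Imp_\alpha) = \Imp_{\alpha+1}$.

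Finally, for $\Imp \subseteq \HOD$, I would prove by induction on $\alpha$ that $\Imp_\alpha \subseteq \HOD$, noting that each $\Imp_\alpha$ is itself ordinal definable because the recursion $\beta \mapsto \Imp_\beta$ is definable in $V$. The transitive closure of any $A \in \Imp_{\alpha+1}$ lies in $\HOD$ by the inductive hypothesis, and $A$ itself is ordinal definable as the unique subset of $\Imp_\alpha$ satisfying some formula $\psi$ with parameters $\vec p \in \Imp_\alpha \subseteq \HOD$, each of which is ordinal definable. The step I expect to be the main obstacle is the verification of separation and replacement in $\Imp$: the argument above depends on applying reflection to the definable hierarchy $\langle \Imp_\alpha \rangle$, which works because the hierarchy is first-order definable in $V$ and the levels are set-sized, but it requires some care to confirm the analogues of the standard absoluteness lemmas for $L$. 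Once reflection is set up, the $\Pdef \subseteq \Pimp$ inclusion is exactly what makes the rest of the proof routine---we never need a nontrivial implicit definition inside the separation argument.
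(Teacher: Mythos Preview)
Your proposal is correct, but it takes a somewhat different route from the paper. For the inner-model claim, the paper does not verify the \ZF\ axioms directly; instead it invokes the general criterion (Jech, theorem 13.9) that any transitive class containing all ordinals, closed under the \Godel\ operations, and almost universal is an inner model of \ZF, and simply checks these hypotheses for \Imp. For $L\subseteq\Imp$, the paper then appeals to the minimality of $L$ among inner models rather than your level-by-level argument that $L_\alpha\subseteq\Imp_\alpha$. Your direct verification via reflection on the hierarchy $\langle\Imp_\alpha\rangle$ is also fine and is in spirit the same argument underlying the general criterion, but it is more work; the paper's approach buys brevity by citing the packaged theorem. For $\Imp\subseteq\HOD$, your inductive argument matches the paper's second proof almost exactly; the paper additionally offers a one-line alternative using the Myhill--Scott characterization of \HOD\ as the second-order constructible universe, observing that $\Pimp(M)$ is contained in the second-order definable power set of $M$.
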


\begin{proof}
Clearly \Imp\ is a transitive class containing all ordinals and closed under the \Godel\ operations. \Imp\ is almost universal as well, for each of its subsets is included in some $\Imp_\alpha$, each of which belongs to $\Imp$. Any such class is an inner model of \ZF.\footnote{See theorem 13.9 of \cite{Jech:SetTheory} for a definition of \emph{almost universality} and a proof that any almost universal transitive class containing all ordinals and closed under the \Godel\ operations is a model of \ZF.} Since $L$ is the least such inner model, we therefore have $L\of\Imp$. To see that $\Imp\of\HOD$, recall from \cite{MyhillScott:OrdinalDefinability} that \HOD\ is identical to the class obtained by transfinite iteration of the second-order definable power set operation. (In other words, \HOD\ is just the second-order constructible universe.) Since the second-order definable power set of a structure $M$ includes the implicitly definable power set $\Pimp(M)$, the inclusion $\Imp\of\HOD$ is immediate. One can also obtain this inclusion by transfinite induction: if $\Imp_\alpha\of\HOD$ and $A\in\Imp_{\alpha+1}$, then $A$ is implicitly definable over $\Imp_\alpha$ by a specific formula with parameters from $\Imp_\alpha$, and since these parameters and $\Imp_\alpha$ are ordinal definable, it follows that $A$ is ordinal definable as the unique subset of $\Imp_\alpha$ that is implicitly defined by that formula from those parameters. That is, although $A$ is only implicitly definable as a subset of $\Imp_\alpha$, this fact serves as a first-order definition of $A$ as an element of $V$.
\end{proof}

We are unsure whether $\Imp$ must satisfy the axiom of choice. This is related to the subtle issue of whether $\Imp^\Imp=\Imp$, that is, whether $\Imp$ can see that it is $\Imp$. If so then \Imp\ would know that $\Imp^\Imp \of \HOD$ and so would satisfy the statement $V = \HOD$, which implies the axiom of choice. In essence, if $\Imp$ can see that it is $\Imp$, then it can define a well-ordering of the universe: one set precedes another when it appears earlier in the $\Imp$ hierarchy or at the same time but with a smaller formula, or with the same formula but with earlier parameters.

Unfortunately, we do not know whether $\Imp^\Imp=\Imp$ must hold. To highlight a difficulty, note that one might aim to prove $\Imp^\Imp=\Imp$ by showing inductively that it is true in a level-by-level manner, that is, by proving $\Imp_\alpha^\Imp=\Imp_\alpha$ for each $\alpha$. Of course, if this identity holds at $\alpha$ then $\Imp_{\alpha+1}\of\Imp_{\alpha+1}^\Imp$, because any $A\in\Imp_{\alpha+1}$ will be implicitly definable over $\Imp_\alpha^\Imp$ and contained in $\Imp$, and so it will be in $\Imp_{\alpha+1}^\Imp$. But the problem for the converse is that perhaps some $B \of \Imp_\alpha^\Imp$, belonging not to $\Imp_\alpha$ but still belonging to some later stage $\Imp_\beta$, will be implicitly definable over $\Imp_\alpha$ in the sense of \Imp\ but not in the sense of $V$. This will happen, for example, if $V \neq \Imp$ and every formula witnessing in \Imp\ that $B$ is implicitly definable over $\Imp_\alpha$ is satisfied in $V$ also by some set other than $B$. So we seem not to be able to argue that $\Imp_{\alpha+1}^\Imp\of\Imp_{\alpha+1}$, and the inductive method of showing $\Imp^\Imp=\Imp$ therefore appears to break down.

Putting that aside for the moment, let us examine the relationship between \Imp\ and $L$. Observe first that for any countable structure $M$, the statement that $A\of M$ is implicitly definable over $M$ is a $\Pi^1_1$ assertion in the codes for $A$ and $M$. Shoenfield absoluteness therefore ensures that $\Imp_\alpha$ is absolute from $V$ to $L$ for $\alpha\lt\omega_1^L$, and so $\Imp_{\omega_1^L}=(\Imp_{\omega_1})^L=L_{\omega_1^L}$. Meanwhile, the $\Imp_\alpha$ hierarchy grows faster than the $L_\alpha$ hierarchy. For as noted above, the satisfaction class $\set{\<\Godelnum{\varphi},\vec a>\st \Imp_\alpha\vDash\varphi[\vec a]}$ is implicitly, but not explicitly, definable over $\Imp_\alpha$. Furthermore, the satisfaction class for hyperarithmetic truth is in $\Pimp(\Imp_\omega) = \Imp_{\omega+1}$ but does not appear in $L$ until stage $\omega_1^{\rm CK}$. The satisfaction relation for $L_{\kappa,\lambda}$ logic over $\Imp_\alpha$, using formulas in $\Imp_\alpha$, is likewise implicitly definable in $\Imp_\alpha$. One naturally wonders whether the $L_\alpha$ hierarchy ever catches up to the $\Imp_\alpha$ hierarchy, in the sense that each $\Imp_\alpha$ is contained in some $L_\beta$. Not necessarily:

\begin{theorem}\label{Theorem.LalgNeqL}
 It is relatively consistent with \ZFC\ that $\Imp\neq L$.
\end{theorem}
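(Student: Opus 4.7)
The plan is to use class forcing over the constructible universe $L$ to introduce a non-constructible real that becomes implicitly definable in the extension, thereby placing it in $\Imp \setminus L$. The overall strategy is to code the generic into a definable feature of the universe (the GCH pattern) in a manner that survives the passage to the $\Imp$-hierarchy.

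First, I would start in $V = L$ and define a class forcing $\P$ that first adds a Cohen real $c$ via $\Add(\omega,1)$ and then, by an Easton-support iteration of coding forcings, codes $c$ into the continuum function by arranging $n \in c$ iff $2^{\aleph_n} = \aleph_{n+2}$ and $n \notin c$ iff $2^{\aleph_n} = \aleph_{n+1}$. The iteration is chosen to be cardinal and cofinality preserving, and sufficiently rigid that the generics at each factor are canonical in a way I will exploit below. In the extension $V[G] = L[c]$, we have $c \notin L$ by Cohen-genericity, and $c$ is ordinal-definable in $V[G]$ from the continuum function, so $c \in \HOD^{V[G]}$.

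The main and most delicate step is to show $c \in \Imp^{V[G]}$. I would argue that at a sufficiently large stage $\alpha$ of the implicit-constructible hierarchy of $V[G]$, the level $\Imp_\alpha^{V[G]}$ suffices to detect the continuum function at each $\aleph_n$ for $n < \omega$. Since $L \subseteq \Imp$, the structure $\Imp_\alpha^{V[G]}$ contains $L_\beta^{V[G]}$ for all $\beta < \alpha$, and by cardinal preservation it contains the ordinals $\aleph_n^{V[G]} = \aleph_n^L$. The coding forcing at stage $\aleph_n$ is to be designed so that the generic witnessing $2^{\aleph_n} = \aleph_{n+2}$ (or $=\aleph_{n+1}$) is the unique object with a specific first-order property at the corresponding level of $\Imp$—for instance, by arranging the factor to make its generic be the unique cofinal branch through a canonical tree, or the unique solution to a combinatorial condition phrased over the local $\Imp$-level. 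Once the continuum function is implicitly definable within $\Imp$, the real $c$ is explicitly definable there, and so belongs to the next $\Imp$-level.

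The main obstacle is precisely the last step: since $\Imp \subseteq \HOD$ can be strict, the definability of the continuum function in $V[G]$ does not automatically translate into implicit definability within the $\Imp$-hierarchy. I expect the core technical work to consist in verifying, by an induction on the Easton iteration, that each stage of the coding contributes an implicitly definable witness to $\Imp_\alpha^{V[G]}$ for the appropriate $\alpha$. An alternative and potentially simpler route would be to replace the Easton coding by Jensen's class coding theorem, arranging $V[G] \models V = L[c] = \HOD$, and then leveraging theorem \ref{Theorem.AlgebraicIffPointwiseDefinable}-style arguments together with a careful level-by-level analysis to show that the canonical decoding of $c$ from the coded structure takes place inside $\Imp^{V[G]}$, even though we do not know in general whether $\Imp = \HOD$.
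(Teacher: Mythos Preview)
Your proposal names the right target---putting a non-constructible set into $\Imp$ of a forcing extension---but it has a real gap at exactly the point you yourself flag as ``the main obstacle.'' To recover $c$ inside $\Imp^{V[G]}$ via the continuum-function coding, you need $\Imp^{V[G]}$ to detect whether $2^{\aleph_n}=\aleph_{n+1}$ or $\aleph_{n+2}$ holds in $V[G]$. But $\Imp^{V[G]}$ is an inner model with its own power sets and its own continuum function; nothing in your outline forces any bijection witnessing $|\mathcal{P}(\aleph_n)^{V[G]}|=\aleph_{n+2}$ to be implicitly definable over some $\Imp_\alpha^{V[G]}$. Standard Easton factors add a plethora of mutually generic subsets, none of which is singled out by a first-order property over the ground, so your hoped-for induction on the iteration has no evident base case. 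The Jensen-coding alternative does not help either: achieving $V[G]\models V=\HOD$ tells you nothing about $\Imp^{V[G]}$ unless one already knows $\Imp=\HOD$ there, and that is precisely one of the open questions the paper raises.

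The paper's proof is a one-step instance of the idea you mention only parenthetically. Force over $L$ with a single Souslin tree $T\in L$ having the \emph{unique branch property}, so that the extension $L[b]$ contains exactly one cofinal branch $b$ through $T$. Since $T\in L\subseteq\Imp^{L[b]}$, the tree lies in some $\Imp_\alpha^{L[b]}$, and then the formula ``$X$ is a cofinal branch through $T$'' is satisfied uniquely by $b$, placing $b\in\Imp_{\alpha+1}^{L[b]}\setminus L$. No iteration, no coding into the continuum function, and no analysis of what $\Imp$ thinks the power-set operation looks like are required; the rigidity of a single set-sized forcing does all the work. Your passing remark about ``the unique cofinal branch through a canonical tree'' was the right instinct---promote it from a design constraint on one factor of a complicated class iteration to the entire argument.
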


\begin{proof} Let $T$ be a Souslin tree in $L$ with the unique branch property, a strong notion of rigidity described in \cite{FuchsHamkins2009:DegreesOfRigidity}. Forcing over $L$ with $T$ yields a model $L[b]$ containing exactly one cofinal branch $b$ through $T$. Since $T\in L$, there is an $\alpha$ with $T \in \Imp_\alpha^{L[b]}$. And in $\Imp_\alpha^{L[b]}$ the formula ``$X$ is a cofinal branch through $T$'' is satisfied uniquely by $b$. So $b \in \Imp_{\alpha+1}^{L[b]}$. But $b\notin L$. So $L[b]$ thinks $\Imp\neq L$.
\end{proof}

\begin{corollary}
 $\Imp$ is not absolute to forcing extensions.
\end{corollary}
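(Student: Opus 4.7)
The plan is to observe that Theorem \ref{Theorem.LalgNeqL} essentially already witnesses the failure of absoluteness; we just need to identify the ground model and the forcing extension and verify that $\Imp$ differs between them. Work inside $L$ as the ground model. By Theorem \ref{Theorem.ImpSatisfiesZF}, we always have $L\of\Imp\of\HOD$, and since $\HOD^L=L$, it follows that $\Imp^L=L$.

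Next, let $T\in L$ be the rigid Souslin tree from the proof of Theorem \ref{Theorem.LalgNeqL}, and consider the forcing extension $L[b]$ obtained by forcing with $T$ over $L$, where $b$ is the unique generic cofinal branch. The argument in the proof of Theorem \ref{Theorem.LalgNeqL} shows that $b\in\Imp^{L[b]}$, while $b\notin L$. Therefore $\Imp^{L[b]}\supsetneq L=\Imp^L$, so the class $\Imp$ computed in the ground model $L$ disagrees with the class $\Imp$ computed in the forcing extension $L[b]$. This contradicts absoluteness of $\Imp$ under the forcing $T$, establishing the corollary.

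There is no real obstacle here beyond unpacking what Theorem \ref{Theorem.LalgNeqL} has already given us: the only point that warrants any care is the observation that $\Imp^L=L$, which follows immediately from the sandwich $L\of\Imp\of\HOD$ provided by Theorem \ref{Theorem.ImpSatisfiesZF} together with the standard fact that $V=\HOD$ holds in $L$. Everything else is just reading off the witness already constructed.
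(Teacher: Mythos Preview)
Your argument is correct and is exactly the intended unpacking: the paper states this corollary without proof, treating it as immediate from Theorem~\ref{Theorem.LalgNeqL}, and your observation that $\Imp^L=L$ via the sandwich $L\of\Imp\of\HOD$ from Theorem~\ref{Theorem.ImpSatisfiesZF} together with $\HOD^L=L$ is precisely the missing link one supplies to see why $\Imp^L\neq\Imp^{L[b]}$.
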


A careful inspection of the proof of theorem \ref{Theorem.LalgNeqL} shows that a copy of $T$ becomes definable in $\Imp_{\omega_1}^{L[b]}$, and so in fact $b\in\Imp_{\omega_1+1}^{L[b]}$. Moreover, the branch $b$ is implicitly definable over $L$ inside any universe in which $b$ is isolated in $T$. In general, we would like to know what else is contained in $\Pimp(L)$. For example, is $0^\sharp$, considered as a set of natural numbers, implicitly definable over $L$? Can any non-constructible real be implicitly definable over $L$?

Refining the notion of implicit constructibility captured by \Imp, we define the class \gImp, the {\df generic} implicitly constructible universe, to consist of those sets $a$ that belong to $\Imp^{V[G]}$ for every set-forcing extension $V[G]$ of the universe. This class is first-order definable in $V$. One can compare \gImp\ with the generic \HOD\ class $\gHOD$, an inner model of \ZFC\ defined in \cite{FuchsHamkinsReitz:Set-theoreticGeology}. Since the proof of theorem \ref{Theorem.ImpSatisfiesZF} works in each forcing extension, we note that $\gImp\of\gHOD$.

\begin{theorem}
In any set-forcing extension $L[G]$ of $L$, there is a further extension $L[G][H]$ with $\gImp^{L[G][H]}=\Imp^{L[G][H]}=L$.
\end{theorem}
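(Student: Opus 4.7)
The plan is to reduce $L[G][H]$ to a model of the form $L[a]$, where $a$ is a real that is $L$-generic for a weakly homogeneous, ordinal-definable forcing $\P^* \in L$. Once in such a model, both $\Imp = L$ and $\gImp = L$ will follow from standard facts about homogeneous forcing combined with the inclusion $L \of \Imp \of \HOD$ given by Theorem~\ref{Theorem.ImpSatisfiesZF}.

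I would begin by forcing inside $L[G]$ to establish the GCH (a routine preliminary), then apply Jensen's coding theorem to obtain a cardinal-preserving extension $L[G][H]$ satisfying $L[G][H] = L[a]$ for a single real $a$. The coding should be arranged---via a suitably symmetric variant of Jensen's construction---so that $a$ is $L$-generic for some weakly homogeneous forcing $\P^* \in L$. Because $\P^*$ is weakly homogeneous and ordinal-definable in $L$, the standard argument on homogeneous forcing yields $\HOD^{L[G][H]} = \HOD^{L[a]} \of \HOD^L = L$, whence $\Imp^{L[G][H]} \of \HOD^{L[G][H]} = L \of \Imp^{L[G][H]}$, so $\Imp^{L[G][H]} = L$.

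For $\gImp^{L[G][H]} \of L$, take any $b \in L[G][H] \minus L$ and let $K$ be a Cohen real generic over $L[a]$. By the product lemma, the pair $(a, K)$ is $L$-generic for $\P^* \times \mathrm{Add}(\omega, 1)$, which is still weakly homogeneous and ordinal-definable in $L$. Hence $\HOD^{L[G][H][K]} = L$, and since $b \notin L$ we obtain $b \notin \HOD^{L[G][H][K]} \fo \Imp^{L[G][H][K]}$, witnessing $b \notin \gImp^{L[G][H]}$. Combined with the trivial inclusion $L \of \gImp$, this gives $\gImp^{L[G][H]} = L$.

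The principal difficulty lies in the coding step. Jensen's coding theorem in its original form produces a real $a$ that is $L$-generic for a rather intricate forcing, whose weak homogeneity is not automatic; one must either invoke a symmetric variant of the coding construction or post-compose with a homogenizing forcing to ensure that the projected forcing $\P^*$ in $L$ has the required homogeneity. Once this technical hurdle is cleared, the remaining steps are a direct application of the well-known interaction between weakly homogeneous forcing and ordinal definability, and the passage from $\HOD$ to $\Imp$ via Theorem~\ref{Theorem.ImpSatisfiesZF} is immediate.
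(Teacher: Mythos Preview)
Your proposal has a real gap at the step you yourself flag as the ``principal difficulty'': Jensen's coding forcing is not known to be weakly homogeneous, and there is no off-the-shelf ``symmetric variant'' or ``homogenizing post-composition'' that produces a real $a$ which is $L$-generic for a weakly homogeneous, ordinal-definable poset $\P^*\in L$ while still satisfying $L[G][H]=L[a]$. You have correctly identified that the whole argument rests on this, but you have not actually supplied it; as written, the proof is incomplete at its load-bearing point. (Coding-style forcings are in fact typically quite inhomogeneous, since the generic real encodes the ground-model data being coded.)

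The paper avoids this entirely by a one-line absorption trick: choose $\theta$ in $L$ with $|\P|^L\leq\theta$ and force with $\Coll(\omega,\theta)$ over $L[G]$. Since $\P*\dot{\Coll}(\omega,\theta)$ collapses $\theta$ and has size $\theta$, it is forcing-equivalent to $\Coll(\omega,\theta)$ itself, so $L[G][H]$ is an $L$-generic extension by the almost homogeneous, ordinal-definable poset $\Coll(\omega,\theta)$. Hence $\HOD^{L[G][H]}=L$, and $\Imp^{L[G][H]}=L$ follows from $L\of\Imp\of\HOD$. No coding, no class forcing, no homogeneity worries.

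One smaller point: your separate argument for $\gImp$ via an additional Cohen real is unnecessary. By definition $\gImp$ is contained in $\Imp$ (take the trivial forcing), so once $\Imp^{L[G][H]}=L$ is established, $\gImp^{L[G][H]}=L$ follows immediately from $L\of\gImp$.
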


\begin{proof}
Let $L[G][H]$ be the forcing extension obtained by absorbing the $G$ forcing into a large collapse $\Coll(\omega,\theta)$ forcing that is almost homogeneous. Since $L$ is the \HOD\ of $L[G][H]$, it follows that $\gImp^{L[G][H]}=\Imp^{L[G][H]}=L$ as well.
\end{proof}

Open questions about \Imp\ abound. Can $\Imp^\Imp$ differ from $\Imp$? Does \Imp\ satisfy the axiom of choice? Can \Imp\ have measurable cardinals? Must $0^\sharp$ be in \Imp\ when it exists?\footnote{An affirmative answer arose in conversation with Menachem Magidor and Gunter Fuchs, and we hope that \Imp\ will subsume further large cardinal features. We anticipate a future article on the implicitly constructible universe.} Which large cardinals are absolute to \Imp? Does \Imp\ have fine structure? Should we hope for any condensation-like principle? Can \CH\ or \GCH\ fail in \Imp? Can reals be added at uncountable construction stages of \Imp? Can we separate \Imp\ from \HOD? How much can we control \Imp\ by forcing? Can we put arbitrary sets into the \Imp\ of a suitable forcing extension? What can be said about the universe $\Imp(\mathbb{R})$ of sets implicitly constructible relative to $\mathbb{R}$ and, more generally, about $\Imp(X)$ for other sets $X$? Here we hope at least to have aroused interest in these questions.

\bibliographystyle{alpha}

\end{document}